\def\'#1{\ifx#1i{\accent"13 \i}\else{\accent"13 #1}\fi}
\newtheorem{theorem}{Theorem}[section]
\title{\sc The 6-girth-thickness of the complete graph}
\author{\sc H{\' e}ctor Casta{\~ n}eda-L{\' o}pez \footnotemark[2]
\and \sc Pablo C. Palomino \footnotemark[3]
\and \sc Andrea B. Ramos-Tort \footnotemark[3]
\and \sc Christian Rubio-Montiel \footnotemark[4]
\and \sc Claudia Silva-Ruiz \footnotemark[3]
}
\begin{document}
\maketitle
\def\thefootnote{\fnsymbol{footnote}}
\footnotetext[2]{Facultad de Ciencias, Universidad Aut{\' o}noma del Estado de M{\' e}xico, 50000, Toluca, Mexico, {\tt hcastanedal854@alumno.uaemex.mx}.}
\footnotetext[3]{Facultad de Ciencias, Universidad Nacional Aut{\' o}noma de M{\' e}xico, 04510, Mexico City, Mexico, {\tt [pablop96|ramos{\_}tort|callame]@ciencias.unam.mx}.}
\footnotetext[4]{Divisi{\' o}n de Matem{\' a}ticas e Ingenier{\' i}a, FES Acatl{\' a}n, Universidad Nacional Aut{\' o}noma de M{\' e}xico, 53150, Naucalpan, Mexico, {\tt christian@apolo.acatlan.unam.mx}.}

\renewcommand{\thefootnote}{\arabic{footnote}}

\begin{abstract}

\noindent\rule{14.5cm}{0.4pt}

The $g$-girth-thickness $\theta(g,G)$ of a graph $G$ is the minimum number of planar subgraphs of girth at least $g$ whose union is $G$. In this paper, we determine the $6$-girth-thickness $\theta(6,K_n)$ of the complete graph $K_n$ in almost all cases. And also, we calculate by computer the missing value of $\theta(4,K_n)$.

\noindent\rule{14.5cm}{0.4pt}

\end{abstract}
\textbf{Keywords:} Thickness, planar decomposition, complete graph, girth.

\textbf{2010 Mathematics Subject Classification:} 05C10.


\section{Introduction}
In this paper, all graphs are finite and simple. A graph in which any two vertices are adjacent is called a \emph{complete graph} and it is denoted by $K_n$ if it has $n$ vertices. If a graph can be drawn in the Euclidean plane such that no inner point of its edges is a vertex or lies on another edge, then the graph $G$ is called \emph{planar}. The \emph{girth} of a graph is the size of its shortest cycle or $\infty$ if it is acyclic. It is known that an acyclic graph of order $n$ has size at most $n-1$ and a planar graph of order $n$ and finite girth $g$ has size at most $\frac{g}{g-2}(n-2)$, see \cite{MR2368647}.

The \emph{thickness} $\theta(G)$ of a graph $G$ is the minimum number of planar subgraphs whose union is $G$. Equivalently, it is the minimum number of colors used in any edge coloring of $G$ such that each set of edges in the same chromatic class induces a planar subgraph.

The concept of the thickness was introduced by Tutte \cite{MR0157372}. The problem to determine the thickness of a graph $G$ is NP-hard \cite{MR684270}, and only a few of exact results are known, for instance, when $G$ is a complete graph \cite{MR0460162,MR0164339,MR0186573}, a complete multipartite graph \cite{MR0158388,MR3661075,rubio2019,MR3243852,MR3610769} or a hypercube \cite{MR0211901}.

Generalizations of the thickness for the complete graphs also have been studied such that the outerthickness $\theta_o$, defined similarly but with outerplanar instead of planar \cite{MR1100049}, and the $S$-thickness $\theta_S$, considering the thickness on a surface $S$ instead of the plane \cite{MR0245475}. 
The thickness has many applications, for example, in the design of circuits \cite{MR1079374}, in the Ringel's earth-moon problem \cite{MR1735339}, or to bound the achromatic numbers of planar graphs \cite{araujo2017complete}. See also \cite{MR1617664}.

In \cite{rubio20174}, the \emph{$g$-girth-thickness} $\theta(g,G)$ of a graph $G$ was defined as the minimum number of planar subgraphs of girth at least $g$ whose union is $G$. Indeed, the $g$-girth thickness generalizes the thickness when $g=3$ and the \emph{arboricity number} when $g=\infty$.

This paper is organized as follows. In Section \ref{Section2}, we obtain the $6$-girth-thickness $\theta(6,K_n)$ of the complete graph $K_n$ getting that $\theta(6,K_n)$ equals $\left\lceil \frac{n+2}{3}\right\rceil$, except for $n=3t+1$, $t\geq 4$ and $n\not=2$, for which $\theta(6,K_2)=1$. In Section \ref{Section3}, we show that there exists a set of 3 planar triangle-free subgraphs of $K_{10}$ whose union is $K_{10}$. The decomposition	was found by computer and, as a consequence, we disproved the conjecture that appears in \cite{rubio20174} about the missing case of the $4$-girth-thickness of the complete graph.


\section{Determining $\theta(6,K_n)$}\label{Section2}

A planar graph of $n$ vertices with girth at least $6$ has size at most $3(n-2)/2$ for $n\geq 6$ and size at most $n-1$ for $1\leq n \leq 5$, therefore, the $6$-girth-thickness $\theta(6,K_n)$ of the complete graph $K_n$ is at least \[\left\lceil \frac{n(n-1)}{3(n-2)}\right\rceil =\left\lceil \frac{n+1}{3}+\frac{2}{3n-6}\right\rceil =\left\lceil \frac{n+2}{3}\right\rceil\] for $n\geq 6$, as well as, $\left\lceil \frac{n+2}{3}\right\rceil$ for $n\in\{1,3,4,5\}$. We have the following theorem.

\begin{theorem} \label{teo1}
The $6$-girth-thickness $\theta(6,K_n)$ of $K_n$ is equal to $\left\lceil \frac{n+2}{3}\right\rceil$ except possibly when $n=3t+1$, for $t\geq4$, and $n\not=2$ for which $\theta(6,K_2)=1$.
\end{theorem}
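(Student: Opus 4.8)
The lower bound being in hand, the whole task is to realize it by an explicit edge-decomposition. The plan is to exploit monotonicity: deleting a vertex from any planar subgraph of girth at least $6$ leaves it planar and cannot lower its girth, so $\theta(6,K_{m-1})\le\theta(6,K_m)$ for every $m$. Hence it suffices to produce, for each $t$, a decomposition of $K_{3t}$ into $t+1$ planar subgraphs of girth at least $6$: the lower bound then forces equality for $n=3t$, and a single vertex deletion gives $\theta(6,K_{3t-1})\le t+1$, matching the lower bound $\lceil\frac{(3t-1)+2}{3}\rceil=t+1$ for $n=3t-1\equiv 2\pmod 3$. The small orders $n\in\{1,3,4,5\}$ and the degenerate $n=2$ (a single edge, whence $\theta(6,K_2)=1$ although the formula would read $2$) are dispatched by hand.

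For the core construction on $K_{3t}$ I would build $t$ nearly maximal planar girth-$6$ subgraphs together with one sparse remainder. The arithmetic is encouraging: $K_{3t}$ has $\frac{3t(3t-1)}{2}$ edges, a planar girth-$6$ graph on $3t$ vertices carries at most $\frac{3(3t-2)}{2}$ of them, and $t$ such maximal pieces already account for all but $\frac{3t}{2}$ edges, which comfortably fit in one additional (even forest-like) subgraph; indeed the total slack is $3(t-1)$. Concretely I would place the vertices on a cycle (labels in $\mathbb{Z}_{3t}$) and take each dense piece to be a Hamiltonian cycle together with a non-crossing system of chords of large span, drawn inside and outside the cycle. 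Such a graph is planar, and provided the chords are spread so that no chord together with the arc it subtends, and no two chords together with the two arcs between them, closes a cycle of length less than $6$, its girth is exactly $6$. A well-chosen base piece together with a set of relabellings is intended to make the $t$ dense pieces edge-disjoint and to exhaust every difference class $1,2,\dots,\lfloor 3t/2\rfloor$, the leftover differences forming the final sparse subgraph.

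The verification then splits into three routine but delicate checks: planarity of each piece (exhibit the plane drawing above), girth at least $6$ (rule out $3$-, $4$- and $5$-cycles, which for the Hamiltonian-cycle-plus-chords model reduces to a span and non-adjacency condition on the chosen chords), and exactness (the pieces are pairwise edge-disjoint, their union is all of $K_{3t}$, and there are exactly $t+1$ of them). Parity enters here: when $t$ is odd, $3t$ is odd and a genuinely maximal, all-hexagonal-faced graph does not exist, so the dense pieces must be taken one edge short of the bound; the slack $3(t-1)$ computed above is precisely what leaves room for this.

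The main obstacle is the remaining residue $n=3t+1$, and I expect it to resist the same approach. Here $\lceil\frac{n+2}{3}\rceil=t+1$ with no rounding, so the budget is tightest: across $t+1$ pieces the total capacity exceeds $\binom{3t+1}{2}$ by only $\frac{3(t-1)}{2}$, forcing almost every piece to be a maximal planar girth-$6$ graph, i.e.\ an all-hexagonal-faced graph, which exists only in even order. Monotonicity is no help, as it only yields $\theta(6,K_{3t+1})\le\theta(6,K_{3t+2})=t+2$, one above the target, and deleting vertices from the $K_{3(t+1)}$ decomposition overshoots by one in the same way. Shaving off that last subgraph appears to demand a genuinely different, tightly packed construction; I anticipate settling only small $t$, by hand or by computer in the spirit of the $K_{10}$ computation used in the $4$-girth case, and leaving the general statement for $t\ge 4$ as the conjecture the theorem records.
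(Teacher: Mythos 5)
Your skeleton matches the paper's: the same lower bound, the same use of monotonicity to settle $n\equiv 2\pmod 3$ from $n\equiv 0\pmod 3$, the same treatment of small orders (by hand or computer), and the same decision to leave $n=3t+1$, $t\ge 4$, open --- which is exactly why the theorem says ``except possibly''. But the heart of the theorem, an actual decomposition of $K_{3t}$ into $t+1$ planar subgraphs of girth at least $6$, is missing from your proposal. What you offer is a plan: place the vertices on a cycle, take each dense piece to be a Hamiltonian cycle with a non-crossing system of long chords, and hope that ``a well-chosen base piece together with a set of relabellings'' makes the $t$ dense pieces edge-disjoint and covering. No base piece is exhibited, and none of the three checks you yourself list (planarity, girth, exactness of the partition) is carried out. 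The counting you do verify --- capacity $(t+1)\cdot\frac{3(3t-2)}{2}$ against $\binom{3t}{2}$, slack $3(t-1)$ --- shows only that a decomposition is not excluded by edge counts; the existence claim is the entire content of the theorem, and in your cyclic model it is genuinely nontrivial, since you need $t$ edge-disjoint Hamiltonian cycles whose chord systems are simultaneously two-page non-crossing, girth-respecting, and jointly exhaust every remaining difference class. No standard construction hands you that combination, and you never produce it.

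Your parity remark also underestimates the odd case. For $n=3t$ with $t$ odd (i.e.\ $n=6k+3$) the paper does not merely take pieces ``one edge short'': it needs a structurally different construction, starting from its decomposition of $K_{6k}$, adjoining three new vertices $u$, $v$, $w$, wiring them into each pair $H_i^1$, $H_i^2$, and splitting the leftover triangular prisms into two further subgraphs. For comparison, the paper's route in both cases is to take the Beineke--Harary thickness decomposition of $K_{6k}$ into $k$ maximal planar graphs (octahedra with inserted paths) plus a perfect matching, and then to split each maximal planar piece into two subgraphs of girth at least $6$, redistributing the triangle edges among the pieces. That idea --- split a maximal planar graph of the classical thickness decomposition into two girth-$6$ halves --- is what makes the construction tractable, and it is precisely the idea your proposal lacks. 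As it stands, your argument establishes the lower bound and the reductions, but not the upper bound for any $n\ge 12$ with $n\equiv 0\pmod 3$, so the theorem is not proved.
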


\begin{proof}
To begin with, Figure \ref{Fig1} displays equality for $n=2,4,7,10$ with $\theta(6,K_n)=1,2,3,4$, respectively. The rest of the cases for $1\leq n\leq 10$ are obtained by the hereditary property of the induced subgraphs. We remark that the decomposition of $K_{10}$ was found by computer using the database of the connected planar graphs of order $10$ that appears in \cite{MR2973372}.

\begin{figure}[htbp]
\begin{center}	
\includegraphics{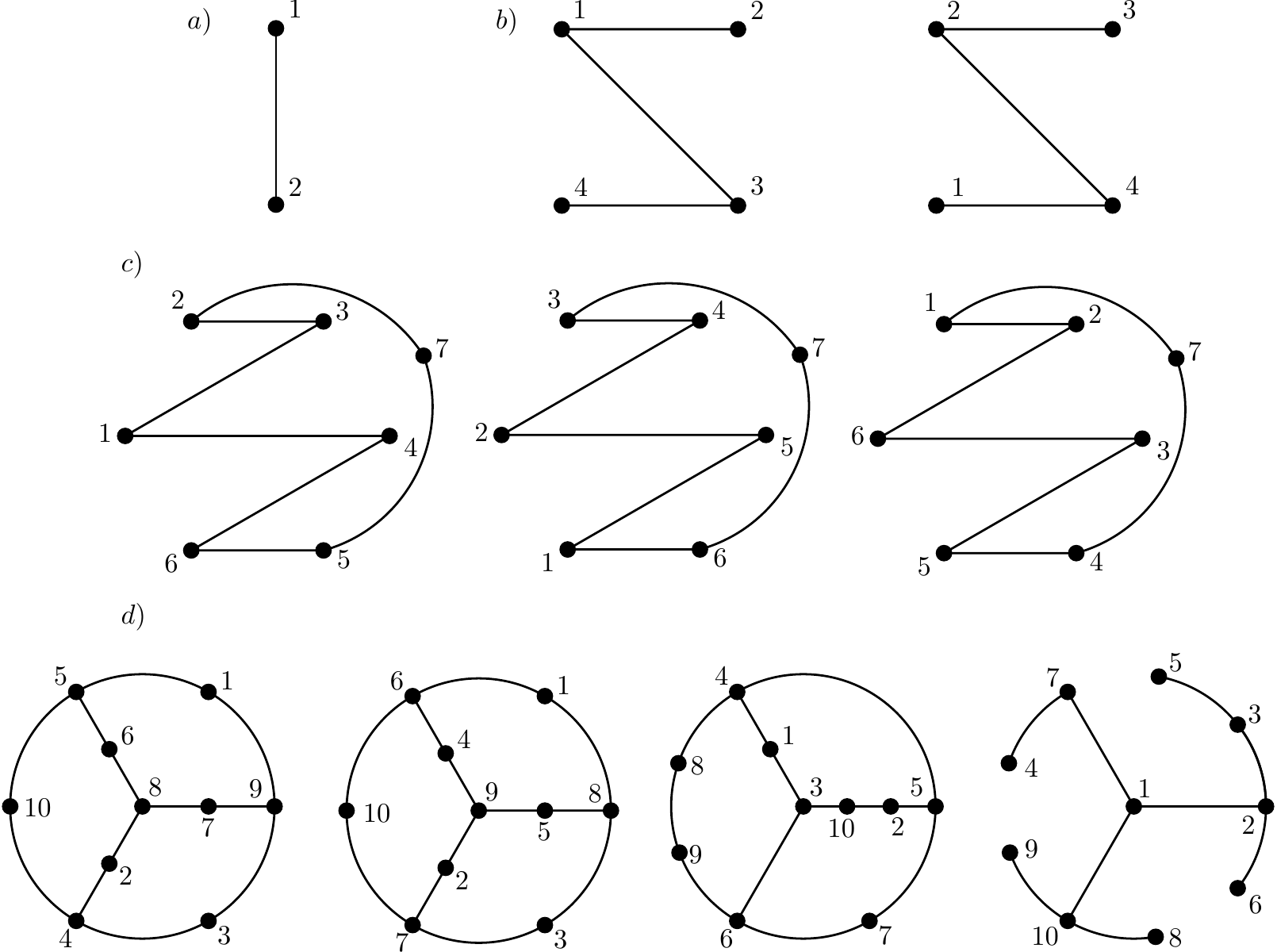}
\caption{\label{Fig1} A decomposition of $K_n$ into $\theta(6,K_n)$ planar subgraphs of girth at least $6$: $a)$ for $n=2$, $b)$ for $n=4$, $c)$ for $n=7$ and $d)$ for $n=10$.}
\end{center}
\end{figure}

Now, we need to distinguish two main cases, namely, when $t$ is even or $t$ is odd for $n=3t$, that is, when $n=6k$ and $n=6k+3$ for $k\geq2$. The cases $n=6k-1$ and $n=6k+2$, i.e., for $n=3t+1$, are obtained by the hereditary property of the induced subgraphs, that is, since $K_{6k-1}\subset K_{6k}$ and $K_{6k+2}\subset K_{6k+3}$, we have \[2k+1\leq \theta(6,K_{6k-1})\leq\theta(6,K_{6k})\text{ and}\]
\[2k+2\leq \theta(6,K_{6k+2})\leq\theta(6,K_{6k+3})\text{, respectively.}\]
Therefore, the case of $n=6k$ shows a decomposition of $K_{6k}$ into $2k+1$ planar subgraphs of girth at least $6$, while the case of $n=6k+3$ shows a decomposition of $K_{6k+3}$ into $2k+2$ planar subgraphs of girth at least $6$. Both constructions are based on the planar decomposition of $K_{6k}$ of Beineke and Harary \cite{MR0164339} (see also \cite{MR0460162,MR0186573,MR2626173}) but we use the combinatorial approach given in \cite{araujo2017complete}. Then, for the sake of completeness, we give a decomposition of $K_{6k}$ in order to obtain its usual thickness. In the remainder of this proof, all sums are taken modulo $2k$.

We recall that complete graphs of even order $2k$ are decomposable into a cyclic factorization of Hamiltonian paths, see \cite{MR2450569}. Let $G^x$ be a complete graph of order $2k$, label its vertex set $V(G^x)$ as $\{x_1,x_2,\dots,x_{2k}\}$ and let $\mathcal{F}^x_i$ be the Hamiltonian path with edges \[x_{i}x_{i+1},x_{i+1}x_{i-1},x_{i-1}x_{i+2},x_{i+2}x_{i-2},\dots,x_{i+k+1}x_{i+k},\] for all $i\in\{1,2,\dots,k\}$. The partition $\{E(\mathcal{F}^x_1),E(\mathcal{F}^x_2),\dots,E(\mathcal{F}^x_k)\}$ is such factorization of $G^x$. We remark that the center of $\mathcal{F}^x_i$ has the edge $e^x_i=x_{i+\left\lceil \frac{k}{2}\right\rceil}x_{i+\left\lceil \frac{3k}{2}\right\rceil}$, see Figure \ref{Fig2}. 
\begin{figure}[htbp]
\begin{center}
\includegraphics{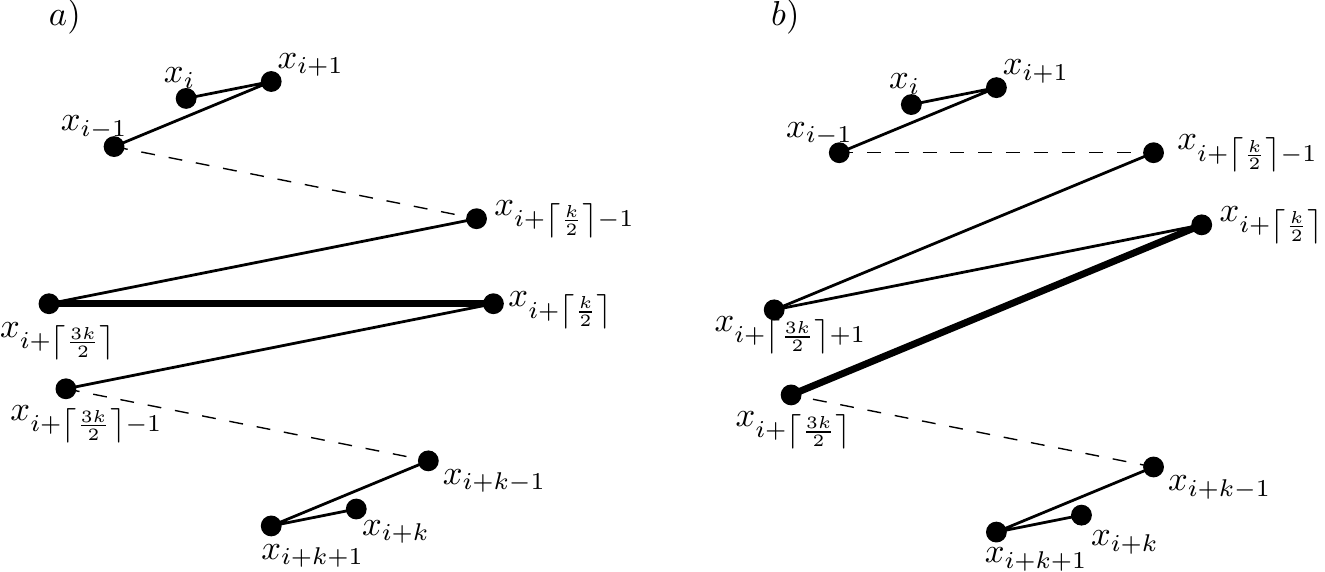}
\caption{The Hamiltonian path $\mathcal{F}^x_i$: Left $a)$ The edge $e^x_i$ in bold for $k$ odd. Right $b)$ The edge $e^x_i$ in bold for $k$ even.}\label{Fig2}
\end{center}
\end{figure}

Let $G^u$, $G^v$ and $G^w$ be the complete subgraphs of $K_{6k}$ having $2k$ vertices each of them and such that $G^w$ is $K_{6k}\setminus (V(G^u)\cup V(G^v))$. The vertices of $V(G^u)$, $V(G^v)$ and $V(G^w)$ are labeled as $\{u_1,u_2,\dots,u_{2k}\}$, $\{v_1,v_2,\dots,v_{2k}\}$ and $\{w_1,w_2,\dots,w_{2k}\}$, respectively.

Let $x$ be an element of $\{u,v,w\}$. Take the cyclic factorization $\{E(\mathcal{F}^x_1),E(\mathcal{F}^x_2),\dots,E(\mathcal{F}^x_k)\}$ of $G^x$ into Hamiltonian paths and denote as $P_{x_i}$ and $P_{x_{i+k}}$ the subpaths of $\mathcal{F}^x_i$ containing $k$ vertices and the leaves $x_i$ and $x_{i+k}$, respectively. We define the other leaves of $P_{x_i}$ and $P_{x_{i+k}}$ as $f(x_i)$ and $f(x_{i+k})$, respectively and according to the parity of $k$, that is (see Figure \ref{Fig2}), 
\begin{center}
$f(x_i) = \begin{cases} x_{i+\left\lceil \frac{3k}{2}\right\rceil} & \text{if  } k \text{ is odd, }\\ x_{i+\left\lceil \frac{k}{2}\right\rceil} & \text{if } k \text{ is even. } \end{cases}$ and $f(x_{i+k}) = \begin{cases} x_{i+\left\lceil \frac{k}{2}\right\rceil} & \text{if  } k \text{ is odd, }\\ x_{i+\left\lceil \frac{3k}{2}\right\rceil} & \text{if } k \text{ is even. } \end{cases}$
\end{center}
We remark that the set of edges $\{x_ix_{i+k}\colon 1\leq i\leq k\}$ is the same set of edges that $\{f(x_i)f(x_{i+k})\colon 1\leq i\leq k\}$.

Now, we construct the maximal planar subgraphs $G_1$, $G_2$,...,$G_{k}$ and a matching $G_{k+1}$ with $6k$ vertices each in the following way. Let $G_{k+1}$ be the perfect matching with the edges $u_ju_{j+k}$, $v_jv_{j+k}$ and $w_jw_{j+k}$ for $j\in\{1,2,\dots,k\}$.

For each $i\in\{1,2,\ldots,k\}$, let $G_i$ be the spanning planar graph of $K_{6k}$ whose adjacencies are given as follows:
we take the 6 paths, $P_{u_i},P_{u_{i+k}},P_{v_i},P_{v_{i+k}},P_{w_i}$ and $P_{w_{i+k}}$ and insert them in the octahedron with the vertices $u_{i},u_{i+k},v_{i},v_{i+k},w_{i}$ and $w_{i+k}$ as is shown in Figure \ref{Fig2} (Left). The vertex $x_j$ of each path $P_{x_j}$ is identified with the vertex $x_j$ in the corresponding triangle face and join all the other vertices of the path with both of the other vertices of the triangle face, see Figure \ref{Fig3} (Right).

\begin{figure}[htbp]
\begin{center}
\includegraphics{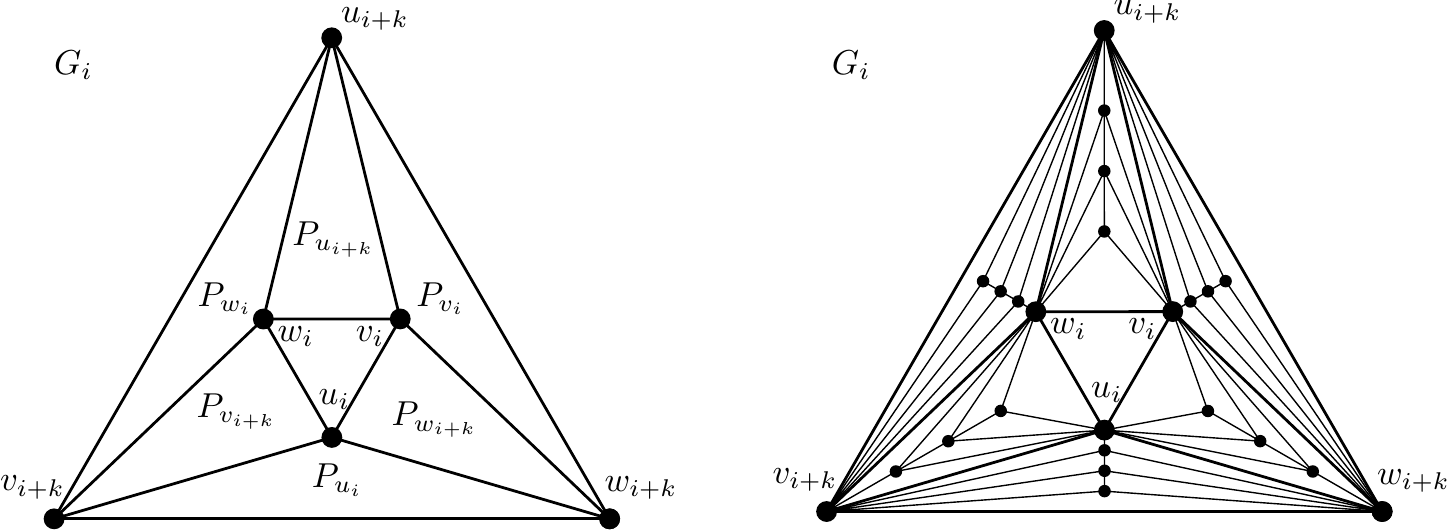}
\caption{(Left) The octahedron subgraph of the graph $G_{i}$. (Right) The graph $G_{i}$.}\label{Fig3}
\end{center}
\end{figure}

By construction of $G_i$, $K_{6k}=\overset{k+1}{\underset{i=1}{\bigcup}}G_{i}$, see \cite{MR0460162,MR0164339} to check a full proof. In consequence, the $k+1$ planar subgraphs $G_i$ show that $\theta(3,K_{6k})\leq k+1$ and then, $\theta(3,K_{6k})= k+1$ owing to the fact that $\theta(3,K_{6k})\geq\left\lceil \frac{\binom{6k}{2}}{3(6k-2)}\right\rceil =k+1.$

Now, we proceed to prove that $\theta(6,K_{6k})\leq 2k+1$ in Case 1 and $\theta(6,K_{6k+3})\leq 2k+2$ in Case 2. The main idea of both cases is divide each $G_i$ into two subgraphs of girth $6$ for any $i\in\{1,\dots,k\}$.

\begin{enumerate}
\item Case $n=6k$.

Consider the set of planar subgraphs $\{G_1,G_2,\dots,G_{k+1}\}$ of $K_{6k}$ which is described above.

\textbf{Step 1.} For each $i\in\{1,\dots,k\}$, remove the six edges of the triangles $u_iv_iw_i$ and $u_{i+k}v_{i+k}w_{i+k}$.

\textbf{Step 2.} For each $i\in\{1,\dots,k\}$, divide the obtained subgraph into two subgraphs $H_i^1$ and $H_i^2$ as follows: The maximum matching of $P_{x_i}$ incident to the vertex $f(x_i)$ belongs to $H_i^1$ (see dotted subgraph in Figure \ref{Fig4}) while the maximum matching of $P_{x_{i+k}}$ incident to the vertex $f(x_{i+k})$ belongs to $H_i^2$. 

Next, the rest of the edges joined to the vertices of the paths $P_{x_i}$ and $P_{x_{i+k}}$, in an alternative way from the exterior region to the region with the vertices $\{u_i,v_i,w_i\}$, belong to $H_i^1$ and $H_i^2$ respectively, such that the edges $f(w_i)u_{i+k}$, $f(v_i)w_{i+k}$ and $f(u_i)v_{i+k}$ belong to $H_i^1$ and the edges $f(w_i)v_{i+k}$, $f(v_i)u_{i+k}$ and $f(u_i)w_{i+k}$ belong to $H_i^2$, see Figure \ref{Fig4}.
\begin{figure}[htbp]
\begin{center}
\includegraphics{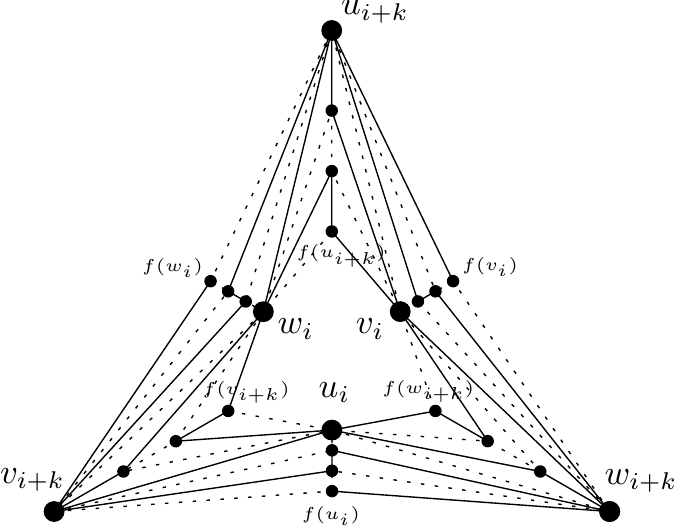}
\caption{\label{Fig4} Partial modification of the subgraph $G_i$.}
\end{center}
\end{figure}

\textbf{Step 3.} Consider the removed edges in Step 1, add the edges $f(v_{i+k})f(u_{i+k})$ and $f(u_{i+k})f(w_{i+k})$ to $H_i^1$ and the edges $f(w_{i})f(v_{i})$ and $f(v_{i})f(u_{i})$ to $H_i^2$, see Figure \ref{Fig5}. The rest of the edges removed in Step 1 are added to $G_{k+1}$ getting the subgraph $H_{k+1}$ which is the union of the paths $\{f(v_i),f(v_{i+k}),f(w_{i+k}),f(w_{i}),f(u_{i}),f(u_{i+k})\}$.	
\begin{figure}[htbp]
\begin{center}
\includegraphics{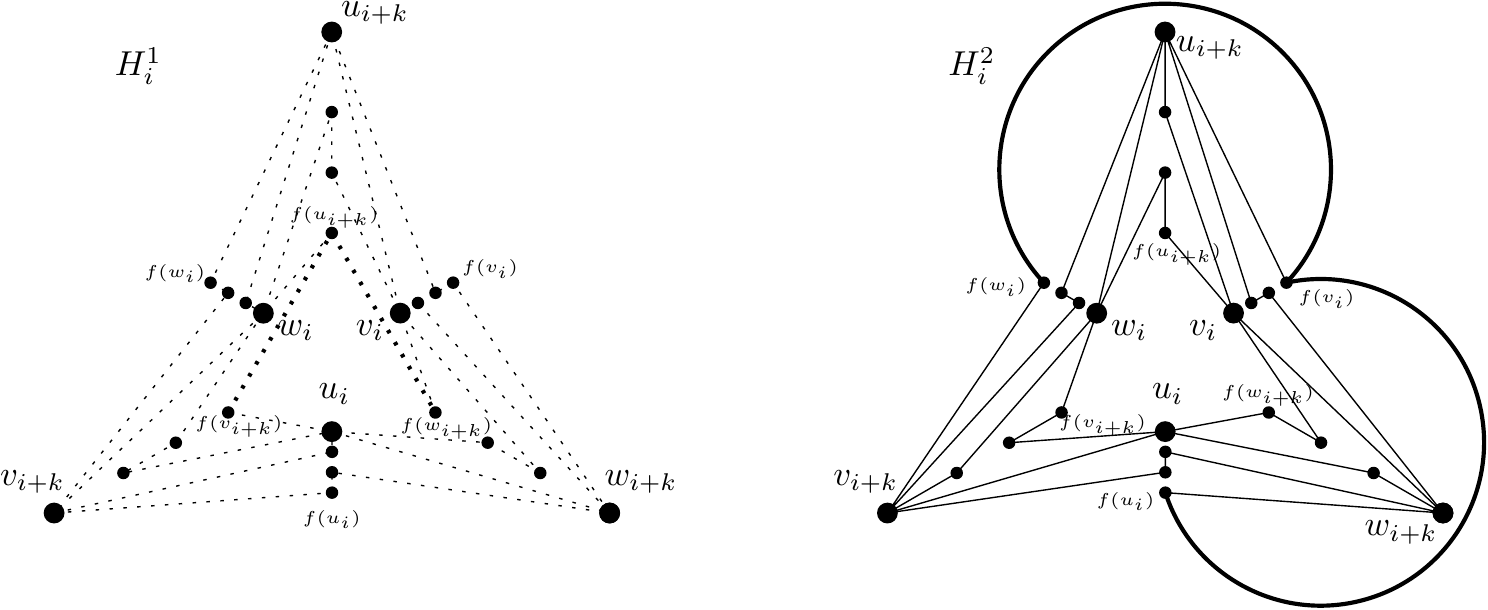}
\caption{\label{Fig5} Subgraphs $H_i^1$ and $H_i^2$ for the Case 1.}
\end{center}
\end{figure}

\item Case $n=6k+3$.

Consider the set of planar subgraphs $\{G_1,G_2,\dots,G_{k+1}\}$ of $K_{6k}$ which is described above as well as Step 1 and 2 of the previous case.

\textbf{Step 3.} Add three vertices $u$, $v$ and $w$ in the subgraphs $H_i^1$ and $H_i^2$, for each $i\in\{1,\dots,k\}$, and the edges $uw_i$, $uf(v_{i+k})$, $vu_i$, $vf(w_{i+k})$, $wv_i$, $wf(u_{i+k})$ into $H_i^1$ as well as the edges $uw_{i+k}$, $uf(v_{i})$, $vu_{i+k}$, $vf(w_{i})$, $wv_{i+k}$, $wf(u_{i})$ into $H_i^2$, see Figure \ref{Fig6}.
\begin{figure}[htbp]
\begin{center}
\includegraphics{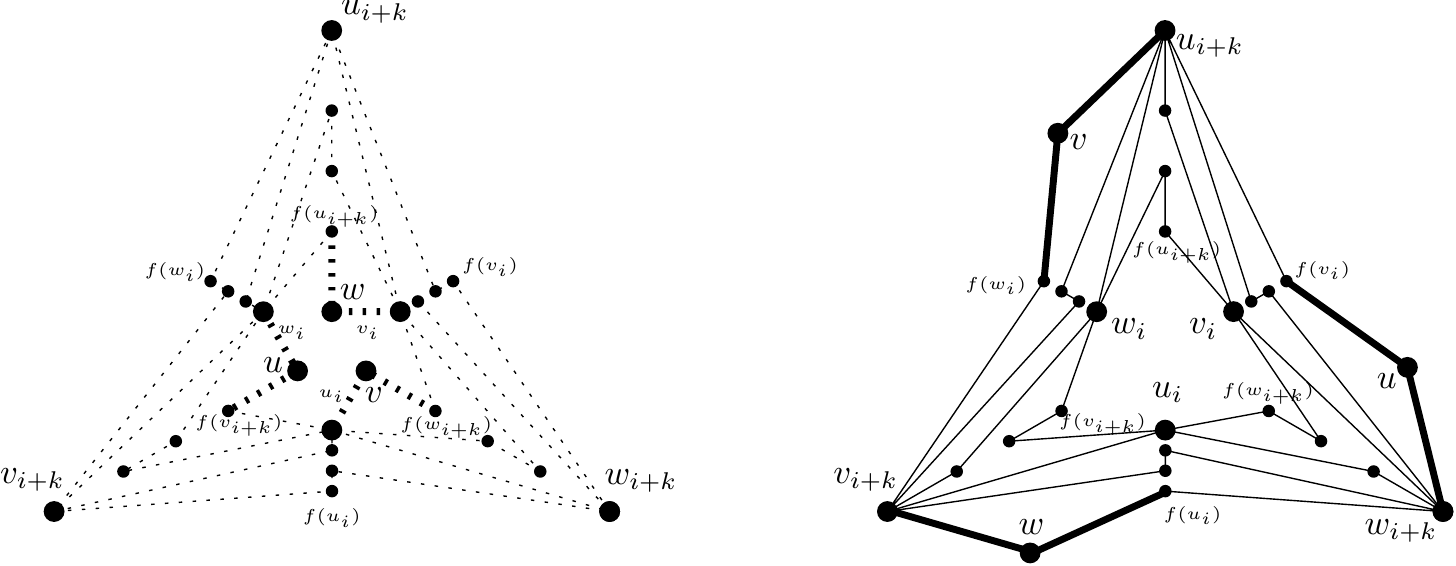}
\caption{\label{Fig6}Subgraphs $H_i^1$ and $H_i^2$ for Case 2.}
\end{center}
\end{figure}

\textbf{Step 4.} On one hand, remains to define the adjacencies between $u$, $v$, $w$ and all the adjacencies between $u$ and $u_i$, $v$ and $v_i$, $w$ and $w_i$, for each $j\in\{1,\dots,k\}$. On the other hand, the edges of the graph $G_{k+1}$ together with the removed edges of the Step 1 form a set of triangle prisms which we split into two subgraphs called $H^1_{k+1}$ and $H^2_{k+1}$ in the following way:

a) The adjacency $vw$ is in $H^1_{k+1}$ while the adjacencies $uv$ and $uw$ are in $H^2_{k+1}$, see Figure \ref{Fig7}.

b) The set of adjacencies $vv_{j+k}$, $ww_j$, $ww_{j+k}$ and $uu_{j+k}$ are in $H^1_{k+1}$ while the set of adjacencies $vv_{j}$, and $uu_{j}$ are in $H^2_{k+1}$, for each $j\in\{1,\dots,k\}$, see Figure \ref{Fig7}.

c) The subgraph $H^1_{k+1}$ contains the adjacencies $v_{j+k}v_j$, $v_ju_j$, $u_jw_j$ and $w_{j+k}u_{j+k}$ (a set of subgraphs $P_{4}\cup K_2$) and the subgraph $H^2_{k+1}$ contains the adjacencies $u_{j}u_{j+k}$, $u_{j+k}v_{j+k}$, $v_{j+k}w_{j+k}$,$w_{j+k}w_{j}$ and $w_jv_{j}$ (a set of subgraphs $P_6$) for all $j\in\{1,\dots,k\}$, see Figure \ref{Fig7}.
\begin{figure}[htbp]
\begin{center}
\includegraphics{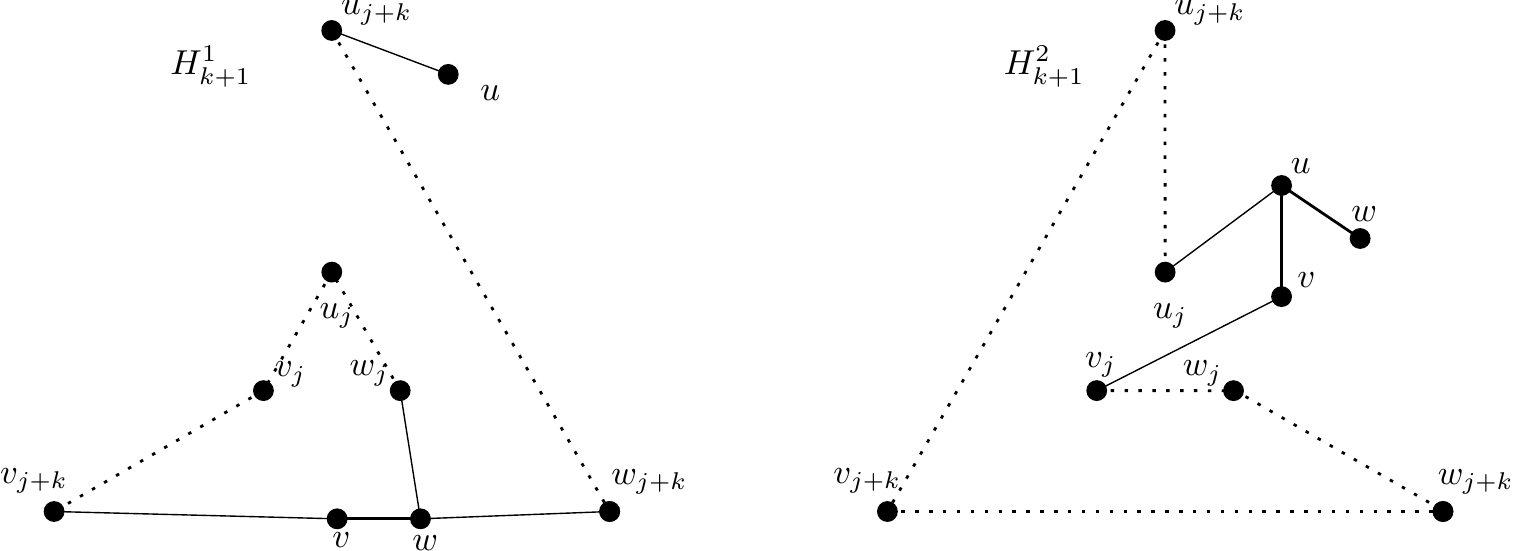}
\caption{\label{Fig7}Partial subgraphs $H^1_{k+1}$ and $H^2_{k+1}$.}
\end{center}
\end{figure}

\end{enumerate}
By the small cases and the two main cases, the theorem follows.
\end{proof}


\section{The $4$-girth thickness of $K_{10}$}\label{Section3}

In \cite{rubio20174}, Rubio-Montiel gave a decomposition of $K_n$ into $\theta(4,K_n)=\left\lceil \frac{n+2}{4}\right\rceil$ triangle-free planar subgraphs, except for $n=10$. In that case, it was bounded by $3\leq\theta(4,K_{10})\leq4$ and conjectured that the correct value was the upper bound. Using the database of the connected planar graphs of order $10$ that appears in  \cite{MR2973372} and the SageMath program, we found two decompositions of $K_{10}$ into $3$ planar subgraphs of girth at least $4$ illustrated in Figure \ref{Fig8}. In summary, the correct value of $\theta(4,K_n)$ was the lower bound and then, we have the following theorem.
\begin{theorem} \label{teo2}
The $4$-girth-thickness $\theta(4,K_n)$ of $K_n$ equals $\left\lceil \frac{n+2}{4}\right\rceil$ for $n\not=6$ and $\theta(4,K_6)=3$.
\end{theorem}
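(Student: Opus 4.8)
The plan is to combine a counting lower bound, a Ramsey-theoretic strengthening at $n=6$, the explicit constructions already available from \cite{rubio20174}, and one fresh construction for the single open value $n=10$. First I would recall the size bound: a planar graph of girth at least $4$ on $n\geq 3$ vertices has at most $2n-4$ edges, so any decomposition of $K_n$ into triangle-free planar subgraphs uses at least
\[\left\lceil \frac{\binom{n}{2}}{2n-4}\right\rceil=\left\lceil \frac{n+1}{4}+\frac{1}{2(n-2)}\right\rceil=\left\lceil \frac{n+2}{4}\right\rceil\]
subgraphs, the final simplification being a routine case analysis on $n \bmod 4$. This gives the lower bound matching the claimed value for every $n\neq 6$; for $n=6$ the counting argument only yields $2$.

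For $n=6$ I would replace the counting bound by a purely combinatorial obstruction: since the Ramsey number $R(3,3)=6$, every $2$-coloring of the edges of $K_6$ contains a monochromatic triangle, so $E(K_6)$ cannot be split into even two triangle-free parts, let alone two triangle-free \emph{planar} parts. Hence $\theta(4,K_6)\geq 3$, and since three triangle-free planar subgraphs (each of at most $8$ edges) easily cover the $15$ edges of $K_6$, exhibiting one such decomposition gives $\theta(4,K_6)=3$. The matching upper bounds for all remaining $n\neq 6,10$ are supplied directly by the explicit triangle-free planar decompositions of \cite{rubio20174}, which attain $\lceil (n+2)/4\rceil$, so here I would simply invoke that prior work.

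The genuinely new ingredient, and the main obstacle, is $n=10$, where the lower bound is $3$ but it had been conjectured that $3$ is unattainable and the answer is $4$. The difficulty is that the packing is extremely tight: three triangle-free planar graphs on $10$ vertices carry at most $3(2\cdot 10-4)=48$ edges while $K_{10}$ has $45$, leaving only three spare edge-slots, so each part must be a (near-)maximal triangle-free planar graph, and a naive hand search is hopeless. To overcome this I would enumerate candidate parts using the database of connected planar graphs of order $10$ from \cite{MR2973372}, restrict to the triangle-free ones whose edge count is close to $16$, and run a computer search (in SageMath) for a triple whose edge sets partition $E(K_{10})$. A successful search produces the decomposition shown in Figure~\ref{Fig8}, giving $\theta(4,K_{10})\leq 3$ and hence $\theta(4,K_{10})=3=\lceil 12/4\rceil$, which disproves the conjecture of \cite{rubio20174}.

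Combining the lower bounds, the constructions of \cite{rubio20174} for $n\neq 6,10$, the three-part decomposition of $K_6$, and the computer-found decomposition of $K_{10}$, we obtain $\theta(4,K_n)=\lceil (n+2)/4\rceil$ for $n\neq 6$ and $\theta(4,K_6)=3$, as claimed.
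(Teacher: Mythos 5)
Your proposal is correct and takes essentially the same route as the paper: everything except $n=10$ (the counting lower bound, the Ramsey-number exception at $n=6$, and the matching constructions) is inherited from \cite{rubio20174}, and the decisive case $n=10$ is settled, exactly as the paper does, by a SageMath search over the order-$10$ planar graphs from the database \cite{MR2973372}, producing the decompositions of Figure~\ref{Fig8}. The only difference is cosmetic: you re-derive the $\left\lceil \frac{n+2}{4}\right\rceil$ lower bound and the $R(3,3)=6$ argument explicitly, whereas the paper simply cites them from the earlier work.
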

\begin{figure}[htbp]
\begin{center}
\includegraphics{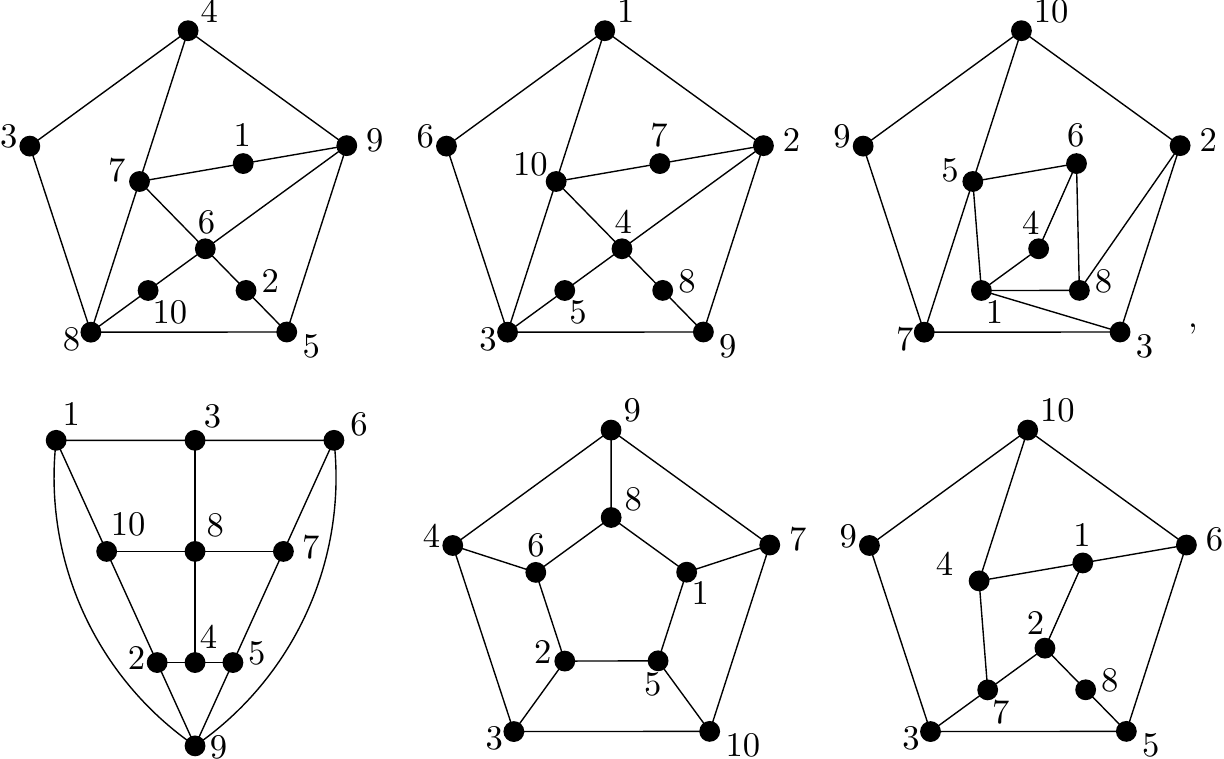}
\caption{\label{Fig8} Two planar decompositions of $K_{10}$ into three subgraphs of girth $4$.}
\end{center}
\end{figure}


\section*{Acknowledgments}
Part of the work was done during the IV Taller de Matem{\' a}ticas Discretas, held at Campus-Juriquilla, Universidad Nacional Aut{\' o}noma de M{\' e}xico, Quer{\' e}taro City, Mexico on June 11--16, 2017. We thank Miguel Raggi and Jessica S{\' a}nchez for their useful discussions and help with the SageMath program.

C. Rubio-Montiel was partially supported by PAIDI grant 007/19 and PAPIIT grant IN107218.


\end{document}